\newtheorem{theorem}{Theorem}[section]
\newtheorem{corollary}[theorem]{Corollary}
\newtheorem{lemma}[theorem]{Lemma}
\newtheorem{definition}{Definition}[section]
\newtheorem{statement}{Statement}
\newcommand\numberthis{\addtocounter{equation}{1}\tag{\theequation}}
\newcommand{\quotes}[1]{``#1''}
\title{\LARGE \bf
A Graph-Theoretic Analysis of Distributed Replicator Dynamic}
\author{Prashant Bansode, Aniket Deshpande, Navdeep Singh
\thanks{Prashant Bansode is with Department of Instrumentation Engineering, Ramrao Adik Institute of Technology, Mumbai, 400706 India.~{\tt\small prashant.bansode@rait.ac.in}}
\thanks{Aniket Deshpande and Navdeep Singh are with Department of Electrical Engineering, Veermata Jijabai Technological Institute, Mumbai, 400019 India.}
}
\begin{document}

\maketitle
\thispagestyle{empty}
\pagestyle{empty}

\begin{abstract}
This paper attempts to develop a graph-theoretic multi-agent perspective of population games to study the \quotes{truncation} behavior. The proposed method considers fitness of the population	as a dynamical system to address the issue of restrictive description of this behavior which pertains to the underlying population dynamic. The fitness dynamic resembles an agreement protocol that enables comments on the steady-state characteristics of the graph that represents the population structure. The structural attributes of the underlying graph and the truncation behavior are emphasized by exploiting the spectral properties of the associated Laplacian matrix. The asymptotic stability of the fitness agreement protocol has been shown to be sufficient in concluding the stability of population dynamics. Simulation results validating the proposed hypothesis have been discussed.
\end{abstract}

\section{Introduction}
\indent Evolutionary game theory has emerged from behavioral ecology, a field that formally analyzes the influence of ecological factors on the evolution of different species or populations in their behavioral contexts. Their evolution pertains to how they revise strategies under certain protocols. The central idea of evolutionary games is to model the evolution of the Nash equilibrium (NE) of strategic form games using a set of differential equations that define the strategy revision protocol. The population games belong to a class of evolutionary games which model strategic interactions among the populations with a key assumption that these populations only occasionally receive opportunities to revise their strategies. It is apropos to consider \cite{von2007theory,nash1951non,basar1999dynamic,narahari2014game,haurie2012games} for a comprehensive literature survey on game theory. The literature on evolutionary games, population games and various revision protocols is spanned over last four decades \cite{smith1974theory,taylor1978evolutionary,weibull1997evolutionary,hofbauer2003evolutionary,hofbauer2009stable,sandholm2015population}; surveying this literature is beyond the scope of this paper. 

\subsection{Relevant literature}
Population games have lately inspired a wide range of the distributed optimization problem-solving techniques in multi-agent systems. A decently recent work on the application of these games to multi-agent systems includes \cite{ marden2009cooperative,ramirez2010population,pantoja2012distributed,pantoja2011dispatch,barreiro2014constrained,mojica2014dynamic,obando2014building, 7172156,barreiro2016distributed}. An application of population games to a certain class of cooperative control problems is demonstrated in \cite{ marden2009cooperative}. Most of the distributed optimization problems in multi-agent systems can be pursued as dynamic resource allocation problems. To this end, researchers have sought to exploit the distributed version of a strategy revision protocol known as distributed replicator dynamic (DRD) \cite{ ramirez2010population,pantoja2012distributed,pantoja2011dispatch,barreiro2014constrained,mojica2014dynamic,obando2014building,7172156,barreiro2016distributed}. The DRD overcomes the drawback of classic replicator dynamic by considering only local-level mutual interactions among the neighboring agents. To elaborate further, \cite{pantoja2012distributed} demonstrates the application of the DRD to control the luminance of several lighting zones with electrical power as a resource under consideration. A similar but more appealing approach is considered in \cite{pantoja2011dispatch} for optimal dispatch of distributed generators in a microgrid. 

\subsection{Motivation}
Authors' interest in the graph-theoretic analysis of population games is motivated by observation of the \quotes{truncation} behavior of agents \cite{pantoja2011dispatch}. The truncation behavior occurs when the population-share associated with the agent has fitness below the average fitness at equilibrium. The fitness may be regarded as the payoff an agent receives for playing a particular strategy\cite{sandholm2010population}. If one considers a graph that depicts the structure of a finite population in the game, then as the population-shares associated with different agents undergo strategy revisions, the interconnection structure of the graph gets significantly influenced. For example, the population-share associated with an agent, can render it fit or unfit for survival, based on its strategy being profitable or nonprofitable as observed  by the revision protocol.\cite{sandholm2010population}. This scenario is theoretically explained in \cite{pantoja2011dispatch}. However, it appears that the subject still lacks a detailed mathematical analysis. 

In population games, an agent may refer to a strategy shared by a subset of the total population. However, in context of the reported work, it is referred to as the fitness associated with the corresponding subset of the population. The population dynamic (distributed replicator dynamic in this case) does not describe the evolution of fitness of the agents. Hence it is insufficient for understanding how the agents converge to the agreement value. It only uses the fitness function as an argument and describes the evolution of the population state. As truncation is the behavior an agent may exhibit at a steady-state, it is more appropriate to define the fitness dynamic to understand how this behavior emerges. The proposed dynamic describes the evolution of the fitness values. The approach of the reported work provides a graph-theoretic explanation to the truncation behavior of the agents with a few more contributions stated below.
\subsection{Contributions}
\indent  The main objective of this paper is to seek an analytical rationale of the \quotes{truncation} behavior of the agents using spectral properties of the graph theory. To the best of authors' knowledge, no significant contribution is seen towards analyzing this behavior in population games by exploiting rich spectral properties of the interaction graph of the game. The contributions of the present work as follows: 
\begin{enumerate}
	\item The formulation of the fitness dynamic of the agents and its realization as a linear parameter-varying dynamic agreement protocol that allows analyzing these games from the perspective of the multi-agent systems. 
	\item Analysis of truncation behavior of the agents using spectral properties of the graph-Laplacian matrix.
	\item A graph-theoretic approach to ensure the stability of the Nash equilibrium of the potential games from a graph-theoretic perspective.
\end{enumerate}
\subsection{Organization of the paper}
\indent The rest of the paper is organized as follows: Some preliminaries on graph theory and potential games are presented in Section \ref{sec2}. Section \ref{sec3} explains the network model of the distributed replicator dynamic and the fitness agreement protocol. The sufficient conditions for the existence of solutions of the distributed replicator dynamic and the fitness dynamic are presented in Subsection \ref{sec4}. Subsection \ref{sect5} and Subsection \ref{sec5.b} elaborate on the existence of Nash equilibrium and its stability, respectively. Section \ref{sim} presents the simulation examples while Section \ref{con} concludes the paper. 
\section{Preliminaries}\label{sec2}
\subsection{Graph theory}\label{sec2.a}
An (undirected) graph $\mathcal{G}$ is defined by the couple $\mathcal{G}=(\mathcal{N}, \mathcal{E})$ with $n$ nodes (or agents) $\mathcal{N}= \{1, 2, \ldots, n\}$ and a number of edges in the set $\mathcal{E} \subseteq \mathcal{N} \times \mathcal{N}$. The neighbor set of the $i^{\rm th}$ node is $\mathcal{N}_i=\{j\in \mathcal{N}| (j,i) \in \mathcal{E} \}$, where $i\in\{1,2,\ldots,n\}$. The number of nodes~$n$ is the cardinality of the graph.
\subsection{Population games}\label{sec2.b}

A population game is characterized by a continuous vector-valued payoff function $f = (f_1,\dots,f_n) :\Re^n_+ \rightarrow \Re^n$. The agents of the population game choose their strategies from a pure strategy set $\mathcal{S}=\{1,2,\ldots,n\}$ and their aggregate behavior is described as a population state $p(t) = \{p_1(t), \ldots, p_n(t)\}$ that belongs to a simplex $\bigtriangleup$ for all $t\geq 0$ such that
\begin{equation}
\bigtriangleup=\{p(t) \in \Re^n_+:\sum_{i=1}^{n}p_i(t)=P_{\mathrm{tot}}\}, \label{simplex}
\end{equation} 
where $P_{\mathrm{tot}}$ is the total population of the game\cite{sandholm2010population}. For the sake of notational simplicity, here onwards the dependence on $t$ is dropped wherever it is obvious. The scalar $p_i$ denotes the number of agents choosing the pure strategy $i$ and $f_i$ denotes the payoff to strategy $i$. 

\indent Given a population game $f$, a Nash equilibrium $p^*$ of $f$ is defined as given below\cite{barreiro2016distributed,sandholm2010population}:
\begin{definition}\label{nef}
	Given a population game $f$, the set of Nash equilibria is defined as follows:
	\begin{align}
	\begin{split}
	NE(f) = \{p\in\bigtriangleup:p_i>0\implies f_i(p)\geq f_j(p),\\
	~\forall i,j \in \mathcal{S}\}. \label{nepsi}
	\end{split}
	\end{align}	 
\end{definition}	
Further, $f$ is a {\em potential game\/} if there exists a continuously differentiable potential function $V:\Re^n_+ \rightarrow \Re$ such that $f(p)=\nabla V(p)$. It is assumed that $V(p)$ is strictly quadratic concave, i.e. $V(p)=-p^T H p + b^Tp +c$ with $H>0$. We replace \quotes{payoff} by \quotes{fitness} throughout.
\section{Problem Formulation}\label{sec3}
\indent This section first introduces the classic and distributed versions of the replicator dynamic and proceeds towards the problem formulation and main results of the paper.
\subsection{Replicator dynamic}
The replicator dynamic is a strategy revision protocol which describes the aggregate behavior of the agents through the evolution of the population state $p(t)$. Assuming that there exists a network of agents such that each agent receives a certain share of the total population $p$, then the classic replicator dynamic describes how the population-shares associated with these agents vary with respect to time. It is given by 
\begin{equation}
\dot p_i (t) = p_i(t) \, [f_i(p(t))-\bar{f}(p(t))],\label{cp}
\end{equation}
where the average fitness of the population is
\begin{equation}
\bar{f}(p)=\frac{1}{P_{\rm tot}}\sum_{j=1}^{n}p_jf_j(p_j).\label{af}
\end{equation}
According to \eqref{cp}, the population-share associated with the agent whose fitness is above the average fitness tends to increase, while the population-share associated with the agent whose fitness is below the average fitness tends to decrease. Thus the total population stays constant.

\subsection{Network model of distributed replicator dynamic} \label{3b}
\begin{figure}
	\centering
	\includegraphics[width=2.5in]{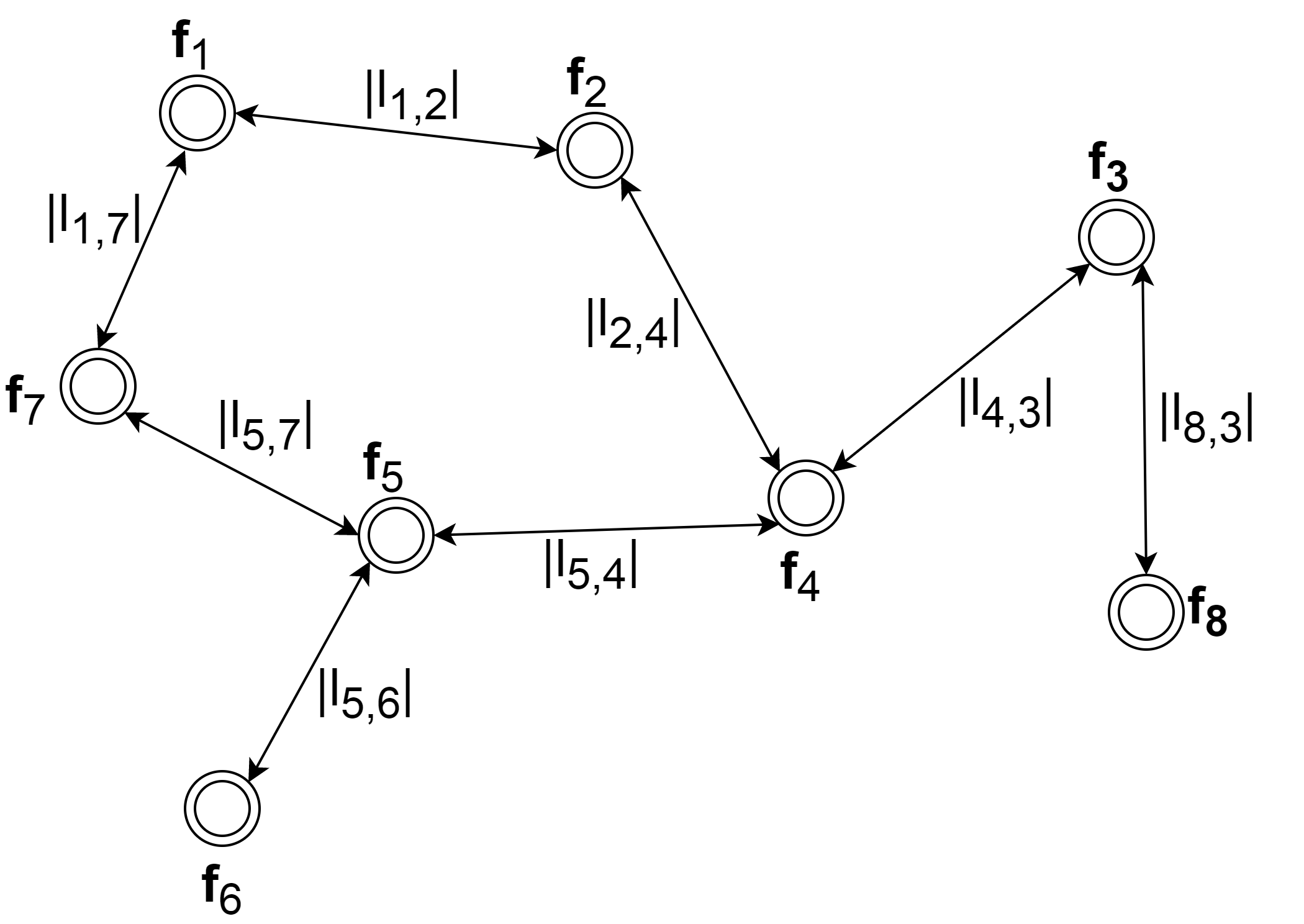}
	\caption{An example of the interaction graph $\mathcal{G}(\mathcal{N},{\mathcal{E}})$ of a finite population of the potential game $f$.}
	\label{graph1}
\end{figure}
Let an undirected graph $\mathcal{G}=\{\mathcal{N},{\mathcal{E}}\}$ depict a network of agents as shown in the example of Fig. \ref{graph1} such that each agent has a nonzero population-share at $t = 0$, where $\mathcal{N}$ denotes the set of agents and $\mathcal{E}$ denotes the set of edges of the graph. Each agent in the network is characterized by the population-share associated with it, which receives a certain fitness for choosing a certain strategy. Henceforth, we allow the fitness scalar to represent the agent wherever necessary. The distributed version of the replicator dynamic \eqref{cp} for the agent is given by\cite{bravo2015distributed,barreiro2016distributed} 
\begin{equation}
\dot{p}_i = \sum_{j\in \mathcal{N}_i} p_i p_j [f_i(p) - f_j(p)],					\label{lre}
\end{equation}
where the difference of fitness between adjacent agents enters into play. 

Note that the simplex introduced in \eqref{simplex} remains invariant under \eqref{cp} and \eqref{lre}.

Introducing the Laplacian matrix $L(p)$ of the graph, i.e.
\begin{equation}
L(p)=\begin{cases}
l_{i,j}=-p_ip_j, \forall (i,j) \in \mathcal{E},\\
l_{i,j}=0, \forall (i,j) \not\in \mathcal{E},\\
l_{i,i}=-\sum_{j\in \mathcal{N}_i}l_{i,j}, \label{aii}
\end{cases}
\end{equation}
where $|l_{i,j}|$ is treated as the weight of the edge connecting $i^{\mathrm{th}}$ and $j^{\mathrm{th}}$ agents on the graph $\mathcal{G}$. Below we write the dynamic \eqref{lre} in a stacked form as
\begin{eqnarray}
\dot{p} = L(p)f(p) \label{nw}
\end{eqnarray}
which represents the replicator dynamic of the population state $p(t)$.
In view of its definition, the Laplacian $L(p)$ is a parameter-varying, real and symmetric matrix which is differentiable and uniformly continuous in $p$. As a consequence, the following holds. 
\begin{statement}\label{as1}
	There exists $\Lambda > 0$ such that the spectral norm $\| L(p) \| < \Lambda,\forall p \in \bigtriangleup,\forall t \geq 0$.
\end{statement}
\begin{statement}\label{as2}
	The gradient of $L(p)$ with respect to $p$ is bounded above by some scalar $\eta$,
	$\left\| \nabla L (p) \right\| \leq \eta, p\in \bigtriangleup.$ 
\end{statement}

\subsection{Fitness agreement protocol}\label{sec3.c}
The fitness dynamic of the population is derived from the dynamic \eqref{nw} as shown below,
\begin{align*}
\dot{f}(p)&=\nabla f(p)\dot{p},					 \\
&=\nabla^2 V(p)\dot{p},\\ 			
&= - H L(p)f(p),\\
&= - M(p)f(p),	\numberthis		\label{aut}
\end{align*}
where $M(p) = H L(p)$ is a positive semidefinite matrix, i.e. $M(p)\geq 0$ [see Theorem 2.2,\cite{wu1988products}]. Notice that $L(p)\boldsymbol{1_n}=0$ which implies $M(p)\boldsymbol{1_n}=0$, where $\boldsymbol{1_n}\in \Re^{n\times 1}$ is a vector containing all ones. That being the case the dynamic \eqref{aut} attains a steady state exactly when all fitnesses are equal. Hence the fitness dynamic \eqref{aut} resembles an agreement protocol\cite{mesbahi2010graph}. 
\subsection{Existence and uniqueness of solution to fitness agreement protocol}\label{sec4}
There exists a unique solution to the dynamic \eqref{nw}\cite{sandholm2010population}. The Hessian matrix $H$ is bounded because $V(p)$ is continuously differentiable and $L(p)$ is also bounded. Consequently, the fitness agreement protocol \eqref{aut} has a unique solution. 
\subsection{Convergence of the DRD to the NE}\label{sect5} 															
The definition \eqref{nef} straightforwardly extends to potential games, however, for such games the NE is unique seeing that the potential function $V(p)$ is strictly quadratic concave. 

The convergence of the population state $p(t)$ to the NE of the potential game may also be described from the graph-theoretic perspectives. In what follows, we provide our main results that link the graph theory with the population games and show that the existence of the NE $p^*$ is strongly associated with the connectedness of the interaction graph $\mathcal{G}$.
\subsection{Main results: Part A} \label{sec5.a}
\begin{lemma}\label{5.1}
	In the potential game $f$, all agents receive equal fitness if and only if $\lambda_2(L(p(t)))>0,\forall t \geq 0$.
\end{lemma}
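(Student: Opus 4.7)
The plan is to parse the biconditional as a characterization of consensus of the fitness dynamic \eqref{aut} via the algebraic connectivity of the weighted Laplacian $L(p)$. The spectral anchor is that $L(p)$ is symmetric positive semidefinite with $L(p)\mathbf{1}_n=0$, so $\lambda_2(L(p))>0$ is equivalent to $\ker L(p)=\mathrm{span}(\mathbf{1}_n)$, i.e., the weighted graph with edge weights $p_ip_j$ is connected. Because the weights factorize as $p_ip_j$ (with $\mathcal{G}$ itself assumed connected), $\lambda_2>0$ is equivalent to $p_i(t)>0$ for every $i$: any vanishing share immediately isolates its node in the weighted graph.

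For sufficiency ($\Leftarrow$), I would assume $\lambda_2(L(p(t)))>0$ for all $t\geq 0$ and take the positive-definite Lyapunov function $V(f)=\tfrac{1}{2}f^\top H^{-1}f$, valid since $H>0$ by the strict concavity of $V(p)$. Along \eqref{aut} a direct computation yields $\dot V=-f^\top L(p)f\leq 0$, and the residue vanishes only when $L(p)f=0$, i.e., $f\in\mathrm{span}(\mathbf{1}_n)$. A LaSalle-style argument in the disagreement coordinate $e=f-\bar f\mathbf{1}_n$ then forces $e(t)\to 0$, giving equal fitness for all agents.

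For necessity ($\Rightarrow$), I would argue by contrapositive: if $\lambda_2(L(p(t_0)))=0$ at some $t_0\geq 0$, then some $p_i(t_0)=0$; but the simplex face $\{p:p_i=0\}$ is forward-invariant under \eqref{lre}, since $\dot p_i=p_i\sum_{j\in\mathcal N_i}p_j[f_i-f_j]$ vanishes at $p_i=0$, so $p_i(t)=0$ for all $t\geq t_0$. After permuting node $i$ to the last position, $L(p(t))=\mathrm{diag}(L_{-i}(p(t)),0)$ for $t\geq t_0$, and $\ker L(p(t))\supseteq\mathrm{span}\{\mathbf{1}_n,e_i\}$ persistently. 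Repeating the Lyapunov analysis now drives $f(t)$ only into this two-dimensional kernel, which contains non-consensus points $\alpha\mathbf{1}_n+\beta e_i$, so the agents cannot all reach equal fitness, contradicting the hypothesis.

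The main obstacle will be the necessity direction: while forward invariance of $\{p_i=0\}$ follows immediately from the factor $p_i$ in \eqref{lre}, I must argue that enlargement of $\ker L(p(t))$ actually prevents consensus of \eqref{aut} rather than merely permitting non-consensus equilibria. This requires either reading \quotes{all agents receive equal fitness} as a statement about every initial condition, or exhibiting a specific initial fitness (say $f(0)=e_i$) for which LaSalle pins $f(t)$ outside $\mathrm{span}(\mathbf{1}_n)$. The asymmetry of $M(p)=HL(p)$ is handled transparently by the weighted Lyapunov $f^\top H^{-1}f$ rather than $f^\top f$.
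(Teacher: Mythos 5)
Your proposal is correct in substance but is a genuinely more detailed route than the paper's own proof, which is essentially a one\--step citation: the paper asserts that $\lambda_2(L(p(t)))>0$ for all $t$ is equivalent to the weighted graph staying connected, notes that $H>0$ transfers this to $M(p)$, and then invokes the standard connectivity\--implies\--agreement result for consensus protocols without any Lyapunov computation inside the lemma. Your sufficiency argument fills in exactly what the paper defers to Theorem \ref{thm5.6}: the function $\tfrac{1}{2}f^{\top}H^{-1}f$ is precisely the paper's \eqref{vdot}, and your derivative $-f^{\top}L(p)f$ is its \eqref{vdot1}, so you are effectively importing the paper's Part~B stability analysis into the Part~A lemma and closing it with LaSalle --- a self\--contained argument the paper never writes down. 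Your necessity direction (contrapositive via forward invariance of the face $\{p_i=0\}$ under \eqref{lre} and the persistent enlargement of $\ker L(p)$ to $\mathrm{span}\{\mathbf{1}_n,e_i\}$) is likewise something the paper does not attempt at all, and the gap you flag is real: an enlarged kernel only permits non\--consensus limits, and since $f=\nabla V(p)$ is slaved to $p$ you cannot freely choose $f(0)=e_i$ to exhibit a bad trajectory, so the ``only if'' direction genuinely needs the quantification over initial conditions you propose. Two shared caveats are worth recording. First, under \eqref{lre} an interior initial condition keeps every $p_i(t)>0$ for all finite $t$, so $\lambda_2(L(p(t)))>0$ for all $t\geq 0$ holds automatically and the operative hypothesis is really a uniform lower bound $\lambda_2\geq\varepsilon>0$ (or non\--vanishing as $t\to\infty$); your LaSalle step quietly needs this uniformity because the system is parameter\--varying, and the paper's proof has the identical unacknowledged issue. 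Second, your observation that the weighted Lyapunov function sidesteps the asymmetry of $M(p)=HL(p)$ is a genuine improvement in precision over the paper's loose claim that $\lambda_2(L)>0$ ``ensures'' $\lambda_2(M)>0$, since $M$ is not symmetric and its eigenvalue ordering is not the relevant object.
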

\begin{proof}
	Since the edge-weights $|l_{i,j}|$ are dynamic, for the graph $\mathcal{G}$ to stay connected, one must ensure that $l_{i,j}<0,\forall i,j \in \mathcal{E}, \forall t\geq 0$. This condition can also be stated directly in terms of the second smallest eigenvalue of $L(p(t))$\cite{mesbahi2010graph}. Assuming that $\mathcal{G}$ is initially connected, the necessary and sufficient condition for $\mathcal{G}$ to stay connected throughout the game play is that $\lambda_2(L(p(t))) >0,\forall t \geq 0$. $\lambda_2(L(p(t)))>0,\forall t \geq 0$ ensures that $\lambda_2(M(p(t)))>0,\forall t \geq 0$ since $H>0$. Hence,  $\lambda_2(L(p(t)))>0,\forall t \geq 0$ is necessary and sufficient to prove that the agreement protocol \eqref{aut} converges to the agreement value.
\end{proof}

\indent In line with the previous Lemma \ref{5.1}, we derive the next result on the convergence of the dynamic given in \eqref{nw} to the NE of the game.
\begin{theorem}\label{5.2}
	If $\lambda_2(L(p(t)))>0,\forall t \geq 0$ then the fixed point of the dynamic \eqref{nw} coincides with the NE $p^{*}$.
\end{theorem}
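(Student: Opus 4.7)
The plan is to argue that any fixed point $p^*$ of \eqref{nw} must, under the spectral hypothesis, be a state where all fitnesses coincide, and then check that this equal-fitness condition is exactly what the definition of Nash equilibrium asks for in a strictly concave potential game. I would begin by noting that a fixed point satisfies $L(p^*)f(p^*)=0$, i.e. $f(p^*)\in\ker L(p^*)$, which reduces the problem to a kernel-characterization question that is tailor-made for the graph-Laplacian machinery already invoked in Lemma~\ref{5.1}.

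Next, I would exploit the standard fact that for a connected (weighted) undirected graph, the Laplacian has a one-dimensional nullspace spanned by $\boldsymbol{1_n}$. The hypothesis $\lambda_2(L(p(t)))>0$ for all $t\geq 0$ applies in particular at the fixed point, so $\ker L(p^*)=\mathrm{span}\{\boldsymbol{1_n}\}$. Combining this with $L(p^*)f(p^*)=0$ yields $f(p^*)=\alpha\boldsymbol{1_n}$ for some scalar $\alpha$, i.e.\ $f_i(p^*)=f_j(p^*)$ for all $i,j\in\mathcal{S}$. This is the same conclusion reached dynamically in Lemma~\ref{5.1}; the present argument simply reads it off the kernel of $L(p^*)$ at equilibrium.

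With equal fitnesses established, I would then match this against Definition~\ref{nef}: the implication $p_i>0\Rightarrow f_i(p^*)\geq f_j(p^*)$ holds trivially (with equality) whenever all payoffs coincide, so $p^*\in NE(f)$. Uniqueness of the NE is already guaranteed because $V(p)$ is strictly quadratic concave (the statement being proved equates $p^*$ with ``the'' NE, so this hangs on the earlier concavity assumption rather than on any new work). Hence every equilibrium of \eqref{nw} under the spectral hypothesis coincides with $p^*$.

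The step I expect to be the most delicate is the claim that $\lambda_2(L(p^*))>0$ genuinely forces $f(p^*)\in\mathrm{span}\{\boldsymbol{1_n}\}$, because the Laplacian $L(p)$ is parameter-varying with edge weights $|l_{i,j}|=p_ip_j$ that can degenerate whenever some component of $p^*$ vanishes. The hypothesis is precisely designed to rule this out: if some $p_i^*=0$ the $i$-th row and column of $L(p^*)$ would be zero, creating an extra kernel direction and driving $\lambda_2$ to zero, contradicting the assumption. Spelling out this interior-of-simplex consequence of $\lambda_2(L(p(t)))>0$ for all $t\geq 0$ is the one place where care is required; once it is in hand, the rest of the argument is an immediate combination of Lemma~\ref{5.1}, the kernel structure of connected-graph Laplacians, and Definition~\ref{nef}.
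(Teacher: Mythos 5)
Your proposal is correct and rests on the same spectral fact as the paper, but it gets there by a cleaner, purely static route. The paper's proof leans on Lemma~\ref{5.1} as a black box: it argues that $\lambda_2(L(p(t)))>0$ keeps all edge weights (hence all $p_i$) positive, and then invokes the dynamic convergence of the agreement protocol \eqref{aut} to conclude that the fitnesses equalize at the fixed point, which is then declared to satisfy Definition~\ref{nef}. You instead read everything off the fixed-point equation itself: $L(p^s)f(p^s)=0$ puts $f(p^s)$ in $\ker L(p^s)$, and $\lambda_2(L(p^s))>0$ forces that kernel to be $\mathrm{span}\{\boldsymbol{1_n}\}$, so $f(p^s)=\alpha\boldsymbol{1_n}$ and the implication $p_i>0\Rightarrow f_i\geq f_j$ in Definition~\ref{nef} holds with equality. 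This buys you two things the paper's version lacks: you never need the dynamical convergence argument of Lemma~\ref{5.1} (only the standard kernel characterization of connected weighted Laplacians), and you make explicit the step the paper only gestures at --- that a vanishing $p_i^s$ would zero out a row of $L(p^s)$, enlarge the kernel, and contradict $\lambda_2>0$, so the hypothesis really does confine $p^s$ to the interior of the simplex. The trade-off is minor: the paper's phrasing emphasizes that the trajectory actually reaches such a fixed point (an existence/convergence claim), whereas your argument is conditional on a fixed point being given, which is all the theorem as stated actually requires.
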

\begin{proof}
	Note $\lambda_2(L(p(t)))>0,\forall t \geq 0$ implies that all edges have non-zero weights for all time. Since $l_{i,j}=-p_i(t)p_j(t)$, it further implies that the population-shares associated with all agents remain non-zero (i.e $p_i(t)>0, \forall t>0$). Secondly, as a consequence of Lemma \ref{5.1} and the fact that $M(p)\boldsymbol{1_n}=0$, the dynamic \eqref{nw} reaches a fixed point, say $p^s$ that qualifies to be the NE $p^*$ of the potential game, as mentioned in Definition \eqref{nef}. Hence proved.
\end{proof}

\indent Lemma \ref{5.1} and Theorem \ref{5.2} provide the next corollary.
\begin{corollary}\label{cor}
	The condition $\lambda_2(L(p(t)))>0,\forall t \geq 0$ is a sufficient condition for the fixed point $p^s$ to be the NE $p^*$ of the potential game $f$.
\end{corollary}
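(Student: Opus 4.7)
The plan is to obtain this corollary as a direct consequence of Lemma \ref{5.1} and Theorem \ref{5.2}, with no new technical machinery required. The corollary is essentially a repackaging of Theorem \ref{5.2} that emphasizes the sufficiency direction; Lemma \ref{5.1} supplies the complementary piece that certifies the fixed point is indeed a Nash-type rest point rather than a mere algebraic zero of the vector field.

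Concretely, I would proceed in three short steps. First, note that the hypothesis $\lambda_2(L(p(t)))>0$ for all $t\geq 0$ forces $l_{i,j}=-p_ip_j\neq 0$ on every edge of $\mathcal{G}$, so all population-shares $p_i(t)$ stay strictly positive, which is the same observation used at the start of the proof of Theorem \ref{5.2}. Second, I would invoke Lemma \ref{5.1}: under the same spectral hypothesis, the fitness agreement protocol \eqref{aut} converges so that $f_i(p^s)=f_j(p^s)$ for every pair $i,j\in\mathcal{S}$ at any fixed point $p^s$ of \eqref{nw}. Third, I would feed these two facts into Definition \ref{nef}: with $p^s_i>0$ for all $i$ and equal fitnesses across all strategies at $p^s$, the implication $p_i>0\Rightarrow f_i(p)\geq f_j(p)$ appearing in \eqref{nepsi} is trivially satisfied, hence $p^s\in NE(f)$.

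To close, I would invoke the uniqueness of the Nash equilibrium for potential games with strictly quadratic concave $V$, noted in Subsection \ref{sect5}, to conclude $p^s=p^*$. I do not anticipate a substantive obstacle, since the spectral hypothesis simultaneously keeps the graph connected (supplying the agreement through Lemma \ref{5.1}) and keeps $p(t)$ strictly in the interior of the simplex (so the NE inequalities in Definition \ref{nef} degenerate to equalities). The only point I would guard against is overstating the result as a necessary condition: the corollary is sufficiency-only, because a loss of connectedness (and hence $\lambda_2=0$ over some nontrivial interval) does not preclude arrival at an equilibrium, it only prevents this particular graph-theoretic certificate from applying.
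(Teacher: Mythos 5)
Your proposal is correct and follows essentially the same route as the paper, which offers no separate proof for this corollary and simply presents it as a direct consequence of Lemma~\ref{5.1} and Theorem~\ref{5.2}. Your three-step unpacking (positive edge weights give interior population shares, the agreement protocol gives equal fitnesses, and Definition~\ref{nef} plus uniqueness of the NE for a strictly concave potential closes the argument) makes explicit exactly the reasoning the paper leaves implicit.
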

A direct consequence of Corollary \ref{cor} provides the following result.
\begin{theorem} \label{5.3}
	Let 
	\begin{equation*}
	\sigma^0=\{\lambda_k(L(p^s))= 0, \forall k=2,\ldots,n\} 
	\end{equation*} for $p^s \not=p^*$, then $\sigma^0\not=\emptyset$ indicates the existence of singleton graphs. 
\end{theorem}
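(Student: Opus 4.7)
The plan is to argue by combining Corollary \ref{cor} with the standard spectral characterization of graph connectedness and the specific form of the Laplacian weights $|l_{i,j}|=p_ip_j$. The hypothesis $p^s\neq p^*$ together with the contrapositive of Corollary \ref{cor} rules out $\lambda_2(L(p^s))>0$, forcing $\lambda_2(L(p^s))=0$; consistent with the hypothesis that $\sigma^0\neq\emptyset$, so at least one of $\lambda_2,\ldots,\lambda_n$ vanishes at $p^s$.

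Next I would invoke the classical fact that, for the symmetric Laplacian $L(p^s)$ of an undirected weighted graph, the algebraic multiplicity of the zero eigenvalue equals the number of connected components of $\mathcal{G}$ at $p^s$. Hence $\sigma^0\neq\emptyset$ together with $\lambda_1(L(p^s))=0$ gives multiplicity at least two, so $\mathcal{G}$ has at least two connected components at the fixed point.

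The structural step then exploits the defining relation \eqref{aii}: the weight on edge $(i,j)$ is $|l_{i,j}|=p_ip_j$, so an edge can drop out of the support of $L(p^s)$ only if $p_i^s=0$ or $p_j^s=0$. Because the graph has disconnected into more than one component, there must be some node $i$ for which every incident edge has weight zero, which forces $p_i^s=0$. Such a vertex has no edges of positive weight in $L(p^s)$, i.e.\ it is isolated, which is precisely the definition of a singleton sub-graph. Thus $\sigma^0\neq\emptyset$ indicates that truncation has occurred at $p^s$ and produced at least one singleton graph.

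The main obstacle I anticipate is bookkeeping at the boundary of $\bigtriangleup$: Statements \ref{as1}--\ref{as2} treat $L(p)$ on the open simplex where all shares are positive, whereas the conclusion concerns $p^s$ with some $p_i^s=0$. I would handle this by taking limits along the trajectory $p(t)\to p^s$, using continuity of $L(\cdot)$ and of the eigenvalue map to transfer the condition $\sigma^0\neq\emptyset$ to the limit Laplacian, and then reading off the vanishing rows/columns to identify the singletons. A secondary care point is ensuring the argument is truly an implication toward singletons (isolated vertices) rather than toward arbitrary disconnection; this follows because the weight structure $p_ip_j$ eliminates a whole star around any vertex with $p_i^s=0$ simultaneously, so the emerging components always contain such isolated vertices.
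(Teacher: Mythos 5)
Your proposal is correct and follows essentially the same route as the paper's proof: both rest on the algebraic multiplicity of the zero eigenvalue equalling the number of connected components, and on the factorization $|l_{i,j}|=p_ip_j$ forcing $p_i^s=0$ or $p_j^s=0$ whenever an edge vanishes, whence the entire star around such a vertex vanishes and it becomes a singleton. Your explicit ``star'' observation actually tightens a step the paper leaves implicit, but the underlying argument is the same.
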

\begin{proof}
	It is obvious that $|\sigma^0|+1$ is the algebraic multiplicity of the eigenvalue $\lambda_1(p)=0$, which is equal to the number of connected components in  $\mathcal{G}$. If $\lim_{t \rightarrow \infty}l_{i,j} = 0$ for some $(i,j) \in \mathcal{E}$ then there must be either $\lim_{t \rightarrow \infty}p_i=0$ or $\lim_{t \rightarrow \infty}p_j=0$, or both; which indicates that the population-share associated with either agent $i$ or agent $j$, or both $i^{\mathrm{th}}$ and $j^{\mathrm{th}}$ agents tend to decline. Let $\mathcal{K}$ be a set of such agents then there must be $|\mathcal{K}|\leq |\sigma^0|$ number of isolated agents in the graph $\mathcal{G}$. These isolated agents represent singleton graphs. Hence proved.
\end{proof}
\subsection{Observations on truncation behavior of agents:}
\indent This subsection is in line with the  truncation behavior discussed in \cite{pantoja2011dispatch}. Here we introduce the singleton graphs which lie at the center of the truncation phenomenon. Let $\mathcal{G}^c \in \mathcal{K},\forall c=1,\ldots,|\mathcal{K}|$ be singleton graphs as defined earlier.
If $f_i(p^s)$ is a fitness corresponding to the population-share $p^s_i=0$ for some $i \in \mathcal{N}$ then we have two following cases. 

\subsubsection*{Case 3.1. When agent $f_i(p^s)$ is an end-point of $\mathcal{G}(\mathcal{N},{\mathcal{E}})$}
\label{case1}
The population-share $p^s_i=0$ implies that eventually $l_{i,j}=0, \forall j\in \mathcal{N}_i$ or $\mathcal{N}_i=\emptyset$, following which $\mathcal{G}_c=\{f_i(p^s)\}$. Since $f_i(p^s)$ is being the end-point in $\mathcal{G}(\mathcal{N},{\mathcal{E}})$, its isolation renders the connectedness of the remaining part of $\mathcal{G}$ unaffected i.e., as shown in the example of Fig. \ref{graph2}.
\begin{figure}
	\centering
	\includegraphics[width=2.65in]{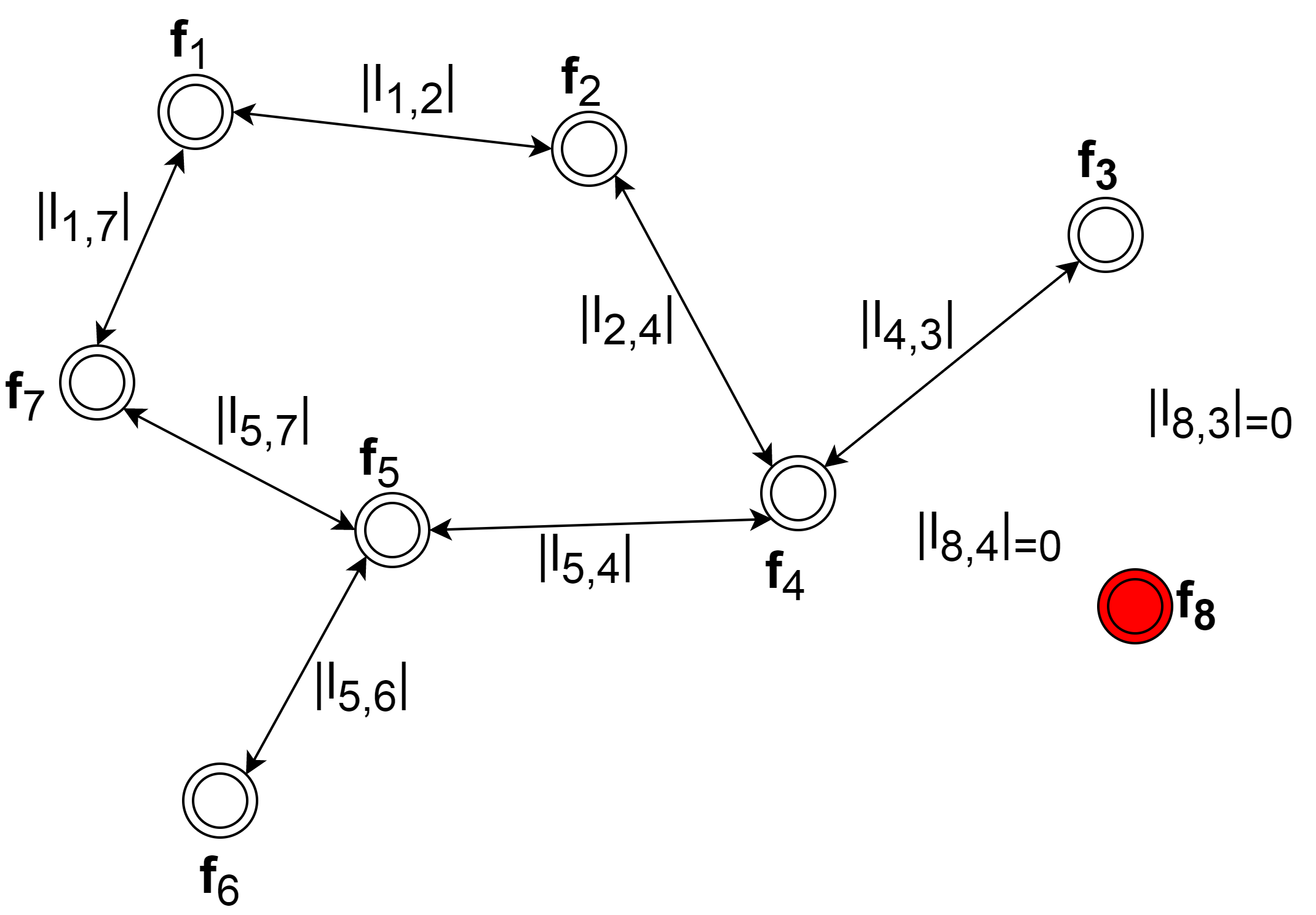}
	\caption{Interaction graph $\mathcal{G}(\mathcal{N},\mathcal{E})$ from Fig. \ref{graph1} depicting Case 3.1: Agent $f_8$ is an end-point.}
	\label{graph2}
\end{figure}
\begin{figure}
	\centering
	\includegraphics[width=2.5in]{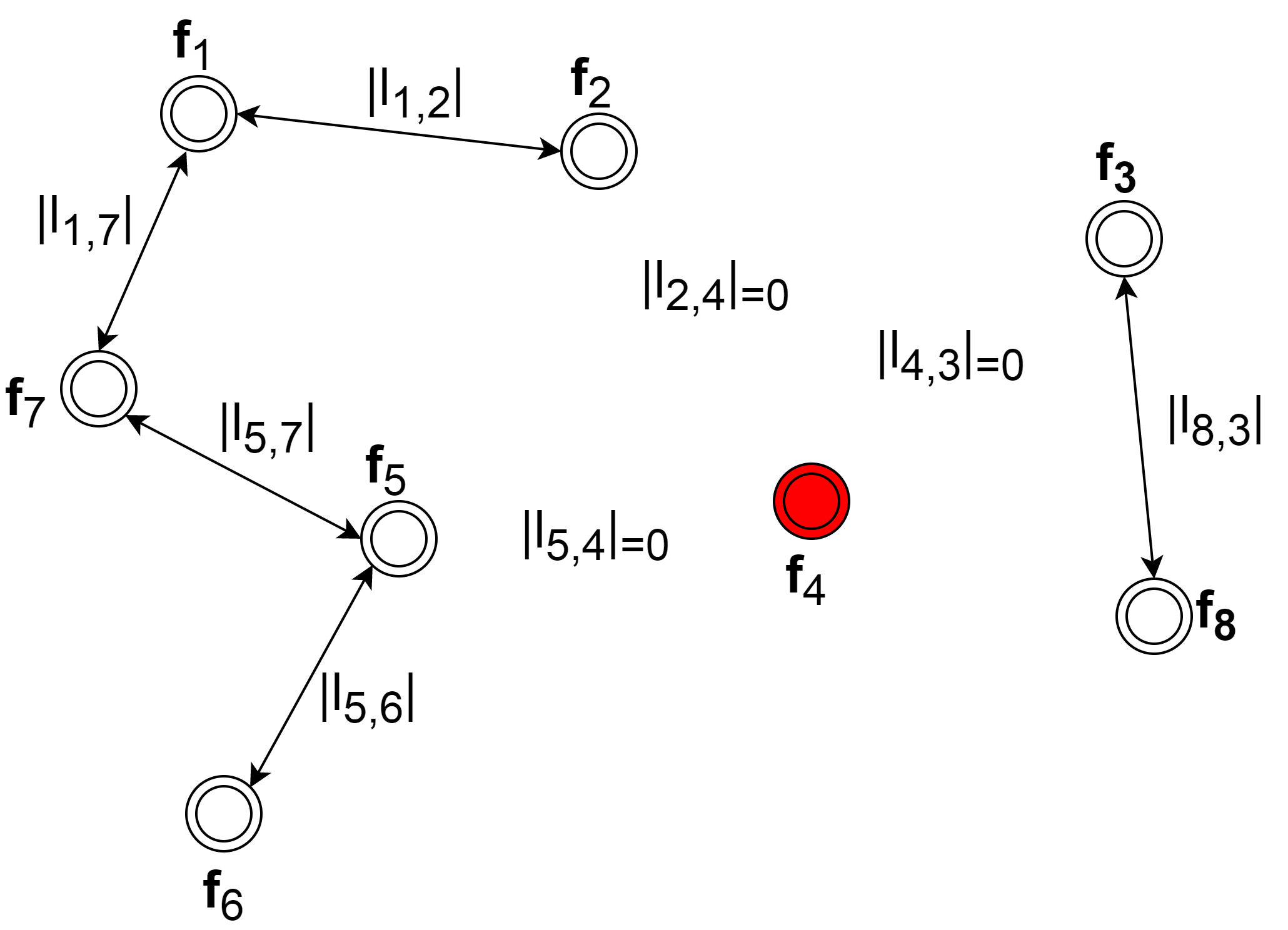}
	\caption{Interaction graph $\mathcal{G}(\mathcal{N},\mathcal{E})$ from Fig. \ref{graph1} depicting Case 3.2: Agent $f_4$ is a cut-point.}
	\label{graph3}
\end{figure}
\subsubsection*{Case 3.2. When agent $f_i(p^s)$ is the cut-point of $\mathcal{G}(\mathcal{N},{\mathcal{E}})$}
This scenario is depicted in Fig. \ref{graph3}, if the agent $i$ holds the position in $\mathcal{G}$ such that $\mathcal{G}' \cap \mathcal{G}''=\{f_i(p^s)\}$ for any two components, say $\mathcal{G'}$, and $\mathcal{G''}$ in $\mathcal{G}$; then the edges that have isolated $f_i(p^s)$ also separate $\mathcal{G'}$ and $\mathcal{G''}$ from each other.
\subsection{Stability analysis of dynamics \eqref{nw} and \eqref{aut}}\label{sec5.b}
\indent Here we show that the stability of the fitness agreement protocol \eqref{aut} and the dynamic \eqref{nw} share a common platform. Before proceeding further, it is worth considering certain notions of the dynamic \eqref{nw} \cite{sandholm2015population} stated subsequently.
\begin{enumerate}
	\item Positive Correlation (PC): 
	\begin{equation}
	\dot{p}(t)\neq0 \implies \dot{p}(t)^Tf(p)>0. \label{cond5.1}
	\end{equation}
	\item Nash Stationarity (NS): 
	\begin{equation}
	\dot{p}(t)=0 \Longleftrightarrow p \in NE(f). \label{cond5.2}
	\end{equation}
\end{enumerate}
\indent Conditions \eqref{cond5.1} and \eqref{cond5.2} strongly imply that the potential function $V(p)$ attains the maximum at $p^*$. The asymptotic stability of the NE $p^*$ of the potential game is bound by these conditions as stated in \cite{sandholm2015population}. 
A similar approach has been considered for the asymptotic stability of the distributed replicator dynamic in \cite{barreiro2016distributed}. The analysis considers that the graph is connected. In essence, the stability analysis is restricted to only $p^*$. However, it does not imply that the distributed replicator dynamic has an unstable equilibrium state when the graph loses the connectivity. In this scenario, it is more convincing to analyze the stability of the fitness agreement protocol \eqref{aut} instead of the dynamic \eqref{nw}. In what follows, we intend to prove that the stability of the fitness agreement protocol \eqref{aut} implies the stability of the dynamic \eqref{nw}.
\subsubsection{Main results: Part B}
\begin{theorem}\label{thm5.6}
	Let $W(p):\Re^n \times \Re \rightarrow \Re_{\geq0}$ be a candidate Lyapunov function chosen such that $\dot{W}(p)\leq -\lambda_2(L(p))f^T(p)f(p)$ then the fitness agreement protocol \eqref{aut} is asymptotically stable.
\end{theorem}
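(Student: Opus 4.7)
The plan is to exploit the hypothesized differential inequality together with the spectral conclusion of Lemma \ref{5.1} to execute a standard Lyapunov argument on the fitness agreement protocol \eqref{aut}. First, I would observe that Lemma \ref{5.1} guarantees $\lambda_2(L(p(t))) > 0$ for all $t \geq 0$ whenever the interaction graph $\mathcal{G}$ remains connected, so the factor multiplying $f^T(p) f(p)$ on the right-hand side of the bound is strictly positive along trajectories of \eqref{aut}. Combined with the nonnegativity $W(p) \geq 0$, this immediately yields $\dot{W}(p) \leq 0$, which delivers Lyapunov stability of the consensus equilibrium of \eqref{aut}.

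To upgrade to asymptotic stability, I would appeal to LaSalle's invariance principle. The simplex $\bigtriangleup$ is compact, and Statement \ref{as1} gives $\| L(p)\| < \Lambda$, so $M(p) = HL(p)$ is bounded on $\bigtriangleup$ and trajectories of \eqref{aut} are forward bounded. Because $W$ is nonnegative and non-increasing along these trajectories, every solution approaches the largest invariant subset contained in $\{\dot{W} = 0\}$. On this set the hypothesized bound forces $\lambda_2(L(p))\,f^T(p)f(p) \to 0$; invoking Lemma \ref{5.1} once more to keep $\lambda_2(L(p))$ bounded below by a strictly positive constant, the only way for this product to vanish is for $f(p)$ to lie in the null space of $L(p)$, which is $\mathrm{span}(\boldsymbol{1_n})$. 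This is exactly the agreement condition identified after \eqref{aut}, so trajectories converge to the consensus equilibrium and asymptotic stability follows.

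The main obstacle I anticipate is the subtle mismatch between the quadratic term $f^T(p)f(p)$ in the hypothesis and the actual equilibrium of \eqref{aut}, at which $f = \bar{f}\boldsymbol{1_n}$ need not be the zero vector. I would resolve this by interpreting $f$ in the inequality as the component of the fitness vector orthogonal to $\boldsymbol{1_n}$, since $L(p)\boldsymbol{1_n} = 0$ implies
\begin{equation*}
f^T L(p) f \;=\; (f-\bar{f}\boldsymbol{1_n})^T L(p) (f-\bar{f}\boldsymbol{1_n}) \;\geq\; \lambda_2(L(p))\,\| f-\bar{f}\boldsymbol{1_n}\|^2 ,
\end{equation*}
so a natural candidate is $W(p) = \tfrac{1}{2}(f-\bar{f}\boldsymbol{1_n})^T H^{-1}(f-\bar{f}\boldsymbol{1_n})$, for which $\dot{W} = -(f-\bar{f}\boldsymbol{1_n})^T L(p)(f-\bar{f}\boldsymbol{1_n})$ matches the stated bound once $f$ is read as this deviation. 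Under this reading the LaSalle set collapses exactly to the consensus subspace, and the argument closes. Having to justify this reinterpretation---rather than any technical step in the Lyapunov/LaSalle machinery---is, in my view, the real obstacle in turning the statement into a rigorous proof.
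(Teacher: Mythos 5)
Your proposal follows the same core route as the paper---the weighted quadratic Lyapunov function $W(p)=\tfrac{1}{2}f^TH^{-1}f$, the computation $\dot W=-f^TL(p)f$, and the Courant--Fischer bound via $\lambda_2$---but you go meaningfully further in two respects. First, the paper stops at deriving the inequality $\dot W\leq-\lambda_2(L(p))f^Tf$ and writes ``Hence proved''; it never actually closes the loop to asymptotic stability. Your LaSalle argument (compactness of $\bigtriangleup$, boundedness of $M(p)$ from Statement~\ref{as1}, collapse of the invariant set to $\ker L(p)=\mathrm{span}(\boldsymbol{1_n})$) supplies exactly the missing conclusion. Second, you correctly identify that the paper's inequality $f^TL(p)f\geq\lambda_2(L(p))f^Tf$ is false for general $f$: taking $f=\boldsymbol{1_n}$ gives a zero left-hand side against a strictly positive right-hand side whenever the graph is connected. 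The bound only holds for the component of $f$ orthogonal to $\boldsymbol{1_n}$, and your reinterpretation in terms of the deviation $f-\bar f\boldsymbol{1_n}$ is the right repair; this is a genuine gap in the paper's own proof that your proposal diagnoses and fixes.

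One caveat on your repair: the claimed identity $\dot W=-(f-\bar f\boldsymbol{1_n})^TL(p)(f-\bar f\boldsymbol{1_n})$ for the modified candidate $W=\tfrac{1}{2}(f-\bar f\boldsymbol{1_n})^TH^{-1}(f-\bar f\boldsymbol{1_n})$ is not quite exact. Differentiating produces an extra cross term $-\dot{\bar f}\,(f-\bar f\boldsymbol{1_n})^TH^{-1}\boldsymbol{1_n}$, which vanishes only if $H^{-1}\boldsymbol{1_n}$ is proportional to $\boldsymbol{1_n}$ (e.g., $H$ a scalar multiple of the identity); in general $\boldsymbol{1_n}^THL\neq 0$ even though $L\boldsymbol{1_n}=0$, so $\bar f$ is not conserved along \eqref{aut}. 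A cleaner way to close your argument is to keep the paper's original $W=\tfrac{1}{2}f^TH^{-1}f$, for which $\dot W=-f^TL(p)f=-(f-\bar f\boldsymbol{1_n})^TL(p)(f-\bar f\boldsymbol{1_n})\leq-\lambda_2(L(p))\|f-\bar f\boldsymbol{1_n}\|^2\leq 0$ holds exactly, and then run your LaSalle step on that bound: the invariant set is still the consensus subspace, and no modification of the Lyapunov function is needed---only of the right-hand side of the stated decay estimate.
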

\begin{proof} Since $H$ is a symmetric positive definite matrix, it is invertible. Let the candidate Lyapunov function for the system \eqref{aut} be,
	\begin{equation}
	W(p)=\frac{1}{2}f^T(p)H^{-1}f(p).\label{vdot}
	\end{equation}
	Differentiating \eqref{vdot} along the trajectories of the fitness vector yields,
	\begin{subequations}
		\begin{eqnarray}
		\dot{W}(p)=-f^T(p)L(p)f(p). \label{vdot1}
		\end{eqnarray}
	\end{subequations}
	Now, if the graph $\mathcal{G}$ remains connected $\forall t \geq 0$, then we have \begin{equation*}
	f^T(p)L(p)f(p) \geq \lambda_2(L(p))f^T(p)f(p).
	\end{equation*} 
	Using this inequality in \eqref{vdot1}, we get
	\begin{eqnarray}
	\dot{W}(p)\leq-\lambda_2(L(p))f^T(p)f(p).\label{lvdot}
	\end{eqnarray}
	Hence proved. 
\end{proof}

\begin{theorem}\label{3.2}
	The asymptotic stability of the fitness dynamic \eqref{aut} implies the asymptotic stability of the dynamic \eqref{nw} and vice versa.
\end{theorem}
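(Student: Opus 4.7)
The plan rests on observing that $f$ and $p$ are related by an affine diffeomorphism. Since $V$ is strictly quadratic concave with $V(p) = -p^{T}Hp + b^{T}p + c$ and $H>0$, the map $\Phi : p \mapsto f(p) = \nabla V(p)$ is a global affine bijection of $\Re^{n}$ whose inverse is also affine in $f$. Both $\Phi$ and $\Phi^{-1}$ are therefore Lipschitz, so there exist positive constants $\alpha,\beta$ (expressible in terms of $\|H\|$ and $\|H^{-1}\|$) such that
\[
\alpha\|p-q\| \;\leq\; \|f(p)-f(q)\| \;\leq\; \beta\|p-q\|, \qquad p,q \in \Re^{n}.
\]

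Next I would check that $\Phi$ conjugates the two dynamics. Because $\nabla f$ is a constant invertible matrix, the chain rule shows that $p(t)$ solves \eqref{nw} if and only if $f(t) := f(p(t))$ solves \eqref{aut}; this is precisely the computation the paper used to derive \eqref{aut} in the first place. Consequently equilibria correspond one-to-one: $p^{*}$ is a fixed point of \eqref{nw} iff $f^{*} = \Phi(p^{*})$ is a fixed point of \eqref{aut}, and by Lemma \ref{5.1} any such $f^{*}$ is the consensus vector $\bar f^{*}\boldsymbol{1_n}$.

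With the conjugacy and the bi-Lipschitz bounds in hand, stability transfers in both directions by a routine change-of-coordinates argument. Assume \eqref{aut} is asymptotically stable at $f^{*}$. Given $\epsilon>0$, set $\epsilon' = \alpha\epsilon$ and let $\delta'>0$ be a Lyapunov radius for \eqref{aut} corresponding to $\epsilon'$. Choosing $\delta = \delta'/\beta$, whenever $\|p(0)-p^{*}\|<\delta$ the upper bound yields $\|f(0)-f^{*}\|<\delta'$, hence $\|f(t)-f^{*}\|<\epsilon'$ for all $t\geq 0$, and the lower bound returns $\|p(t)-p^{*}\|<\epsilon$. Attractivity follows from continuity of $\Phi^{-1}$: $f(p(t)) \to f^{*}$ forces $p(t) = \Phi^{-1}(f(p(t))) \to p^{*}$. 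The converse direction is symmetric, interchanging the roles of $\alpha$ and $\beta$ and of the two dynamics.

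The main obstacle is essentially bookkeeping rather than mathematics: one must make sure stability is being compared on compatible invariant sets, since $p$ is confined to the simplex $\bigtriangleup$ while $f(p)$ is confined to its affine image $\Phi(\bigtriangleup)$. Bijectivity of $\Phi$ restricted to $\bigtriangleup$ handles this automatically, and no deeper Lyapunov machinery is required because the two dynamics are related by an invertible linear change of state.
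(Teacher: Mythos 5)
Your argument is correct, but it follows a genuinely different route from the paper's. The paper proves this theorem by recycling the Lyapunov function $W(p)=\frac{1}{2}f^T(p)H^{-1}f(p)$ of Theorem~\ref{thm5.6}: it rewrites $\dot{W}(p)=-f^T(p)L(p)f(p)$ as $-f^T(p)\dot{p}\leq 0$ in \eqref{rela}, identifies this with the positive-correlation condition \eqref{cond5.1}, adds Nash stationarity \eqref{cond5.2} under the connectivity hypothesis $\lambda_2(L(p(t)))>0$, and then invokes the standard potential-game result that PC together with NS yields asymptotic stability of the Nash equilibrium. You instead observe that $\Phi=\nabla V$ is an affine bi-Lipschitz bijection (since $H>0$) that conjugates \eqref{nw} to \eqref{aut} --- which is indeed exactly the computation by which \eqref{aut} was derived --- and transfer stability and attractivity through the change of coordinates with explicit $\epsilon$--$\delta$ bookkeeping. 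Your route buys two things the paper's does not: a truly symmetric argument (the paper's proof only substantiates the direction from \eqref{aut} to \eqref{nw}; the ``vice versa'' in the statement is never separately argued), and independence from the PC/NS machinery, so the equivalence holds for any corresponding pair of equilibria rather than only for the Nash equilibrium reached under persistent connectivity; it also avoids having to verify the \emph{strict} inequality $\dot{p}^Tf(p)>0$ demanded by \eqref{cond5.1}, which the paper asserts but does not check. What the paper's route buys is the explicit connection to the evolutionary-dynamics literature and the interpretation of $\dot{W}=-f^T(p)\dot{p}$ as positive correlation. Your closing remark about comparing stability on the compatible invariant sets $\bigtriangleup$ and $\Phi(\bigtriangleup)$ is the right bookkeeping point and is handled, as you say, by the bijectivity of $\Phi$.
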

\begin{proof}
	The proof simply follows from the equation \eqref{vdot1}. We have,
	\begin{subequations}
		\begin{eqnarray}
		\dot{W}(p)&=&-f^T(p)\{L(p)f(p)\},\\
		&=&-f^T(p)\dot{p},\label{rela}\\
		&\leq&0.\label{inv}
		\end{eqnarray}
	\end{subequations}
	Comparing the equation \eqref{rela} to the inequality \eqref{cond5.2}, we assert that the positive correlation holds between $f^t(p)$ and $\dot{p}$, and along with the condition $\lambda_2(L(p(t)))>0,\forall t \geq 0$, the Nash stationarity also holds. Hence, the dynamic \eqref{nw} is also asymptotically stable. Hence proved. 
\end{proof}
\begin{corollary}\label{cor3.8}
	Every connected component in $\mathcal{G}$ exhibits asymptotically stable dynamics.
\end{corollary}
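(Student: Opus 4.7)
The plan is to apply Theorems \ref{thm5.6} and \ref{3.2} separately to each connected component, exploiting the block-diagonal form that $L(p)$ assumes when $\mathcal{G}$ is not globally connected. First I would enumerate the connected components $\mathcal{G}_1,\ldots,\mathcal{G}_m$ of $\mathcal{G}$ and relabel agent indices so that nodes of the same component occupy contiguous positions. Then $L(p)$ becomes block-diagonal with blocks $L^{(k)}(p^{(k)})$, where $p^{(k)}$ and $f^{(k)}$ are the sub-vectors of population-shares and fitnesses restricted to $\mathcal{G}_k$. Since each $\mathcal{G}_k$ is individually connected, Lemma \ref{5.1} gives $\lambda_2(L^{(k)}(p^{(k)}))>0$ along trajectories, and the population dynamic \eqref{nw} decouples as $\dot{p}^{(k)}=L^{(k)}(p^{(k)})\,f^{(k)}$.

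Next I would reuse the candidate Lyapunov function $W(p)=\tfrac{1}{2}f^{T}(p)H^{-1}f(p)$ from Theorem \ref{thm5.6}. Its derivative $\dot{W}(p)=-f^{T}(p)L(p)f(p)$ splits across components,
\begin{equation*}
\dot{W}(p) \;=\; -\sum_{k=1}^{m}\bigl(f^{(k)}\bigr)^{T}L^{(k)}(p^{(k)})\,f^{(k)},
\end{equation*}
and the eigenvalue estimate used in the proof of Theorem \ref{thm5.6} applied within each block yields $(f^{(k)})^{T}L^{(k)}f^{(k)}\geq\lambda_2(L^{(k)})\,(f^{(k)})^{T}f^{(k)}$ on the orthogonal complement of $\mathbf{1}^{(k)}$. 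Consequently every summand is non-positive and vanishes only when $f^{(k)}$ reaches the local agreement subspace of $\mathcal{G}_k$, which certifies asymptotic convergence of the fitness protocol on each component.

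Theorem \ref{3.2} then transfers this stability from the fitness protocol to the population dynamic \eqref{nw} block by block. Singleton components, which arise generically at equilibrium per Theorem \ref{5.3}, are trivial because $\mathcal{N}_i=\emptyset$ makes $\dot{p}_i$ vanish identically through \eqref{lre}. The main obstacle will be justifying the blockwise reduction rigorously, since the fitness vector $f(p)=-2Hp+b$ is not itself block-decoupled whenever $H$ has non-zero entries bridging different components. The resolution is that only $L(p)$ needs to be block-diagonal for $\dot{p}^{(k)}$ to depend solely on $f^{(k)}$, and the Lyapunov calculation only requires the quadratic form $f^{T}Lf$ to split into component contributions; both properties hold automatically from the block structure of $L(p)$, so the argument of Theorem \ref{thm5.6} goes through inside each $\mathcal{G}_k$ independently of the global coupling in $H$.
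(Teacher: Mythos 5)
Your proposal is correct and follows essentially the same route as the paper: exploit the block-diagonal structure of $L(p)$ over the connected components and apply Theorems \ref{thm5.6} and \ref{3.2} blockwise. You go further than the paper's one-line argument by explicitly noting that $H$ couples the components through $f(p)=-2Hp+b$ and resolving this via the splitting of the quadratic form $f^{T}L(p)f$ into per-component contributions, which is a worthwhile clarification the paper omits.
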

\begin{proof}
	If $p^s\not=p^*$ then there exist $|\sigma^0|$ connected components in $\mathcal{G}$ and each connected component, i.e. $\mathcal{G}_q,~\forall q=1,\ldots,|\sigma^0|$ has an associated Laplacian $L_q(p)$; owing to which the graph-Laplacian $L(p)$ assumes a block diagonal form:
	\begin{equation}
	L(p)=\left(\begin{array}{ccccc}
	L_1(p) & \hfill & \hfill & \hfill & \hfill\\
	\hfill & L_2(p) & \hfill & \hfill & \hfill\\
	\hfill & \hfill &  \ddots & \hfill & \hfill\\
	\hfill & \hfill & \hfill & L_{|\sigma^0|-1}(p) & \hfill\\
	\hfill & \hfill & \hfill & \hfill & L_{|\sigma^0|}(p)\\
	\end{array}\right).
	\end{equation} 
	\indent Apparently, each connected component has a $0$ eigenvalue of algebraic multiplicity equal to 1. Also, each $\mathcal{G}$ has a subset of agents having their population and fitness dynamics. In line with this, the stability analyses discussed in Theorems \ref{thm5.6} and \ref{3.2} readily apply to the connected components in $\mathcal{G}$.
\end{proof}
\section{Simulation results}\label{sim}
All simulations are performed using the {MATLAB-SIMULINK} software. The population graphs are plotted using ``wgPlot.m" and ``gplotwl.m" matlab routines. We first consider a population dynamic of 5 agents. A simple nearest-neighbor interconnection is considered as shown in Fig. \ref{graph9}. Initially, the total population is equally distributed among all agents ($p_i(0)=0.2,\forall i \in \mathcal{N}$), owing to which all the edges initially have equal weights as indicated by the blue color associated with them. The vertical color map indicates the weight associated with each agent i.e., the value of the fitness scalar which is also correlated with the size of its dot. Initially, the fitness scalar have been assigned different weights. As fitness agreement protocol evolves, all fitness scalars except $f_5$ converge to the agreement value as can be seen from Fig. \ref{graph4}. It is due to the fact that the population state $p_5$ belonging to the $5^{\mathrm{th}}$ agent tends to decline and eventually goes to 0 as shown in Fig. \ref{graph5}. This results in removal of the edges $|l_{1,5}|$ and $|l_{4,5}|$ as reflected in Fig. \ref{graph7}. As a consequence of the this, at a steady state, the graph $\mathcal{G}$ comprises two connected components, i.e. the one that retains the agents $f_1$ to $f_4$ and the other one being singleton graph that contains $f_5$. Fig. \ref{graph8} confirms the existence of 2 connected components in $\mathcal{G}$. The population structure at the steady state is manifested in Fig. \ref{graph10} wherein the edge-weights are associated with specific color shades on the horizontal color map whose numeric values are also indicated alongside the respective edges. Fig. \ref{graph6} verifies that the simplex \eqref{simplex} remains invariant.

In a similar way, we also consider a random network of 20 agents as shown in Fig. \ref{graph16}. Initially, the total population is equally distributed among all agents and the fitness scalars are calculated to have different values. 
The connected components in $\mathcal{G}$ are highlighted by boxed regions in Fig. \ref{graph17}, the graph comprises 2 connected components and 9 isolated agents. 
\begin{figure}[t!]
	\centering
	\includegraphics[width=3.65in]{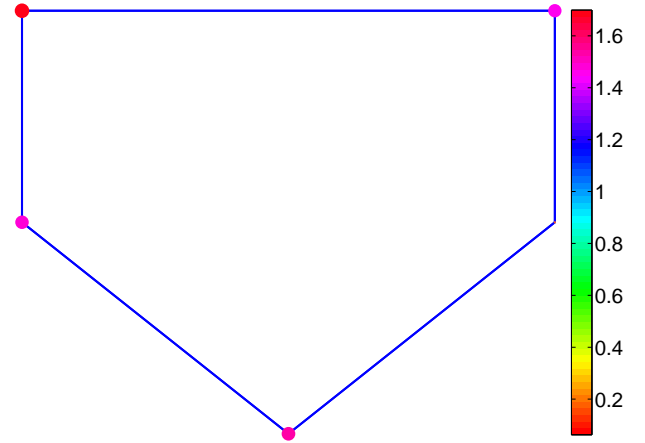}
	\caption{The graph $\mathcal{G}$ consisting 5 agents at time $t_0$.}
	\label{graph9}
\end{figure}
\begin{figure}[t!]
	\centering
	\includegraphics[width=3.65in]{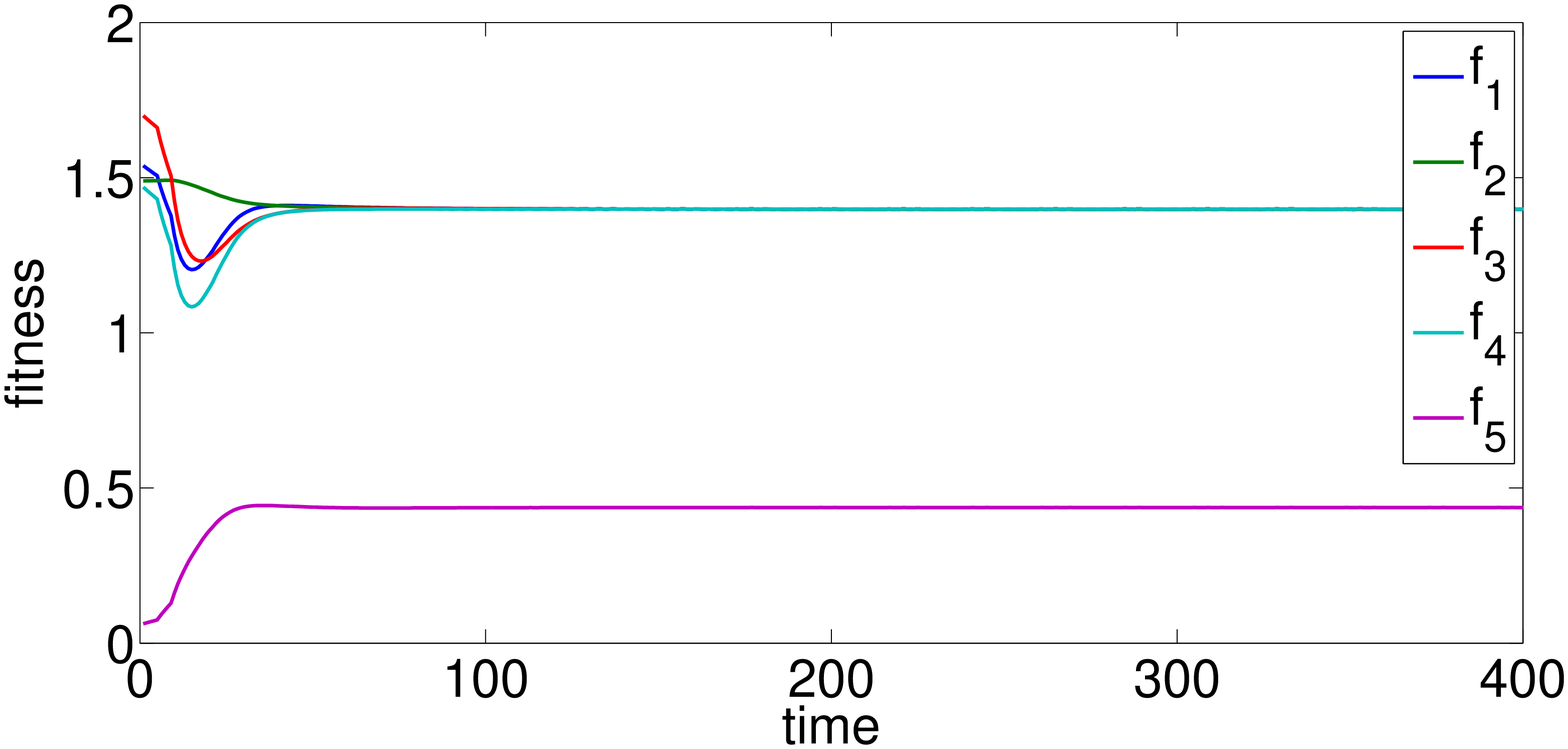}
	\caption{Evolution of fitness of the agents on the graph $\mathcal{G}$ from Fig. \ref{graph9}.}
	\label{graph4}
\end{figure}
\begin{figure}[t!]
	\centering
	\includegraphics[width=3.65in]{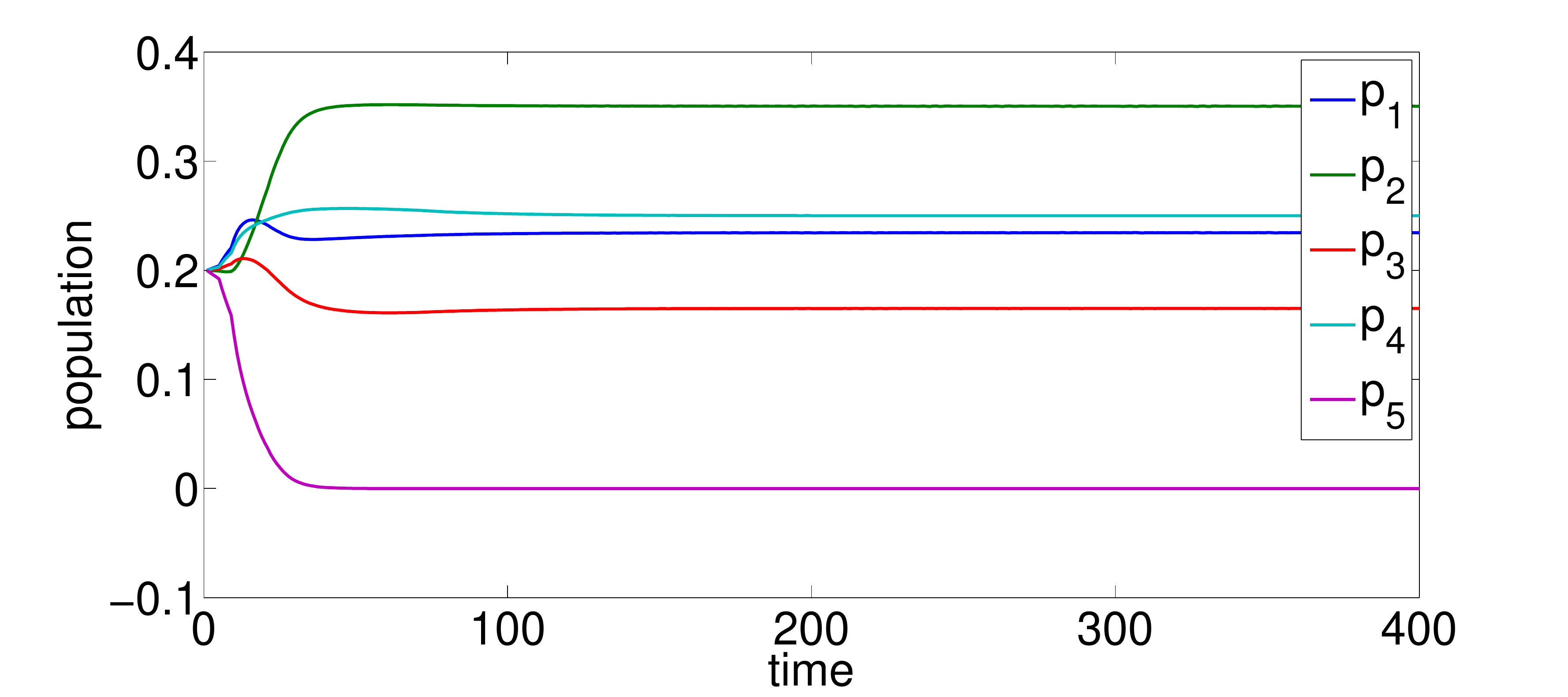}	
	\caption{Evolution of population of the agents on the graph $\mathcal{G}$ from Fig. \ref{graph9}.}
	\label{graph5}
\end{figure}
\begin{figure}[t!]
	\centering
	\includegraphics[width=3.65in]{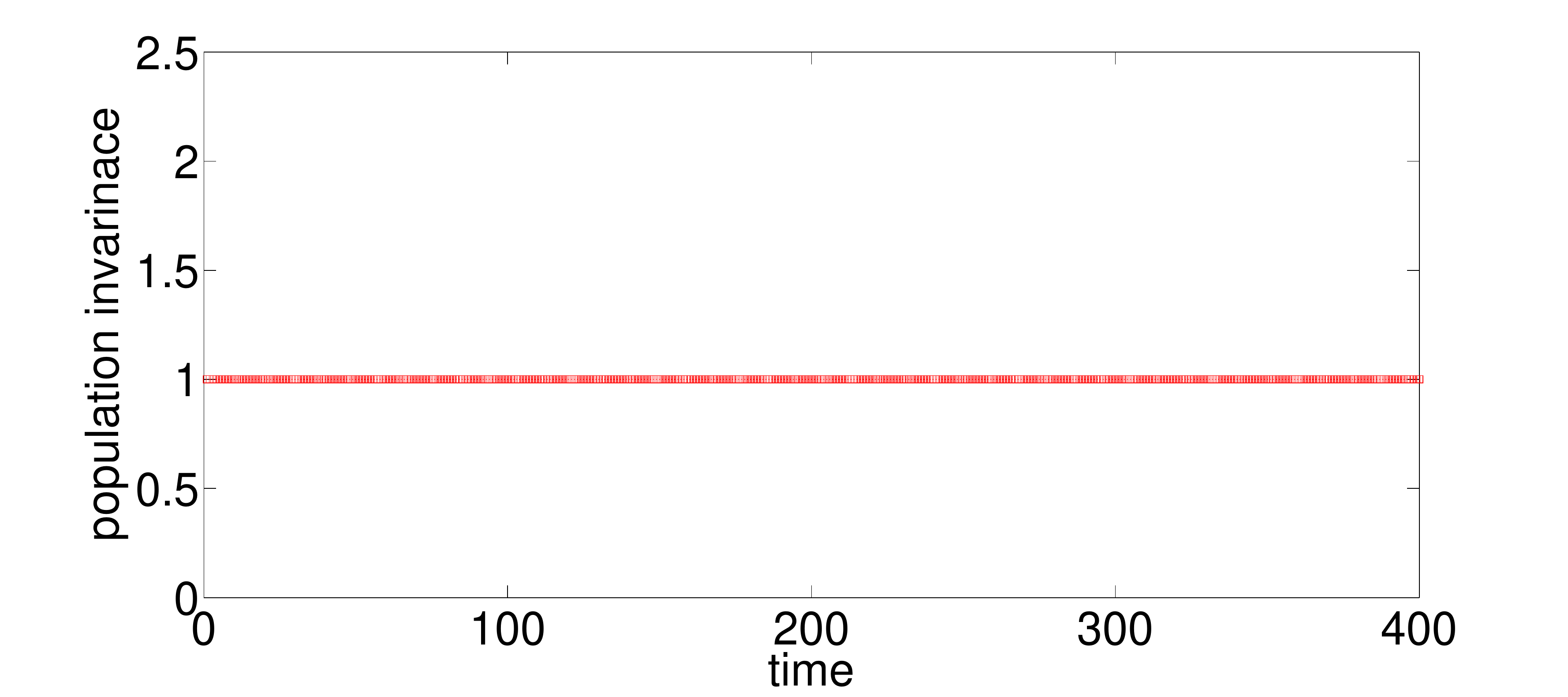}
	\caption{Invariance of population of the agents on the graph $\mathcal{G}$ from Fig. \ref{graph9}.}
	\label{graph6}
\end{figure}
\begin{figure}[t!]
	\centering
	\includegraphics[width=3.65in]{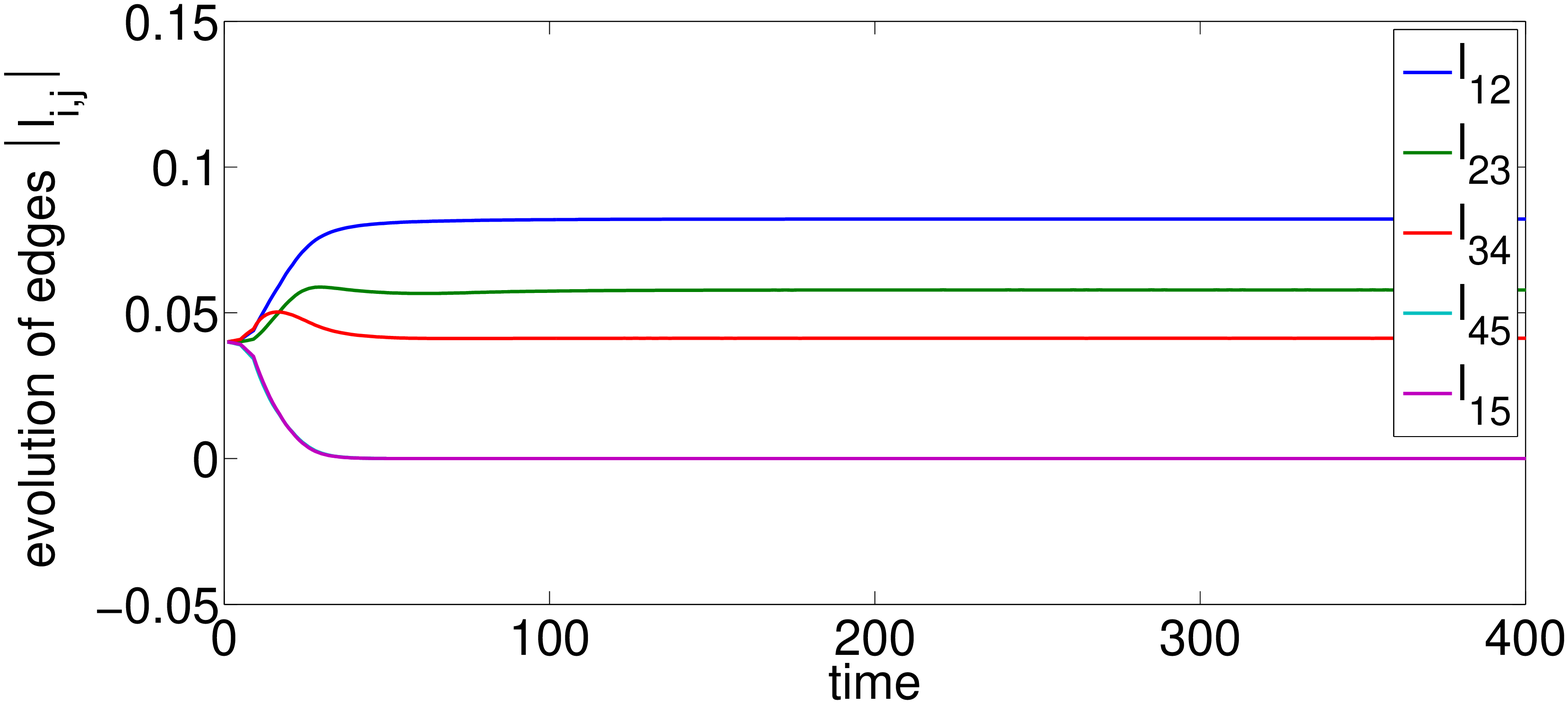}
	\caption{Evolution of the edges of the graph $\mathcal{G}$ from Fig. \ref{graph9}.}
	\label{graph7}
\end{figure}
\begin{figure}[t!]
	\centering
	\includegraphics[width=3.65in]{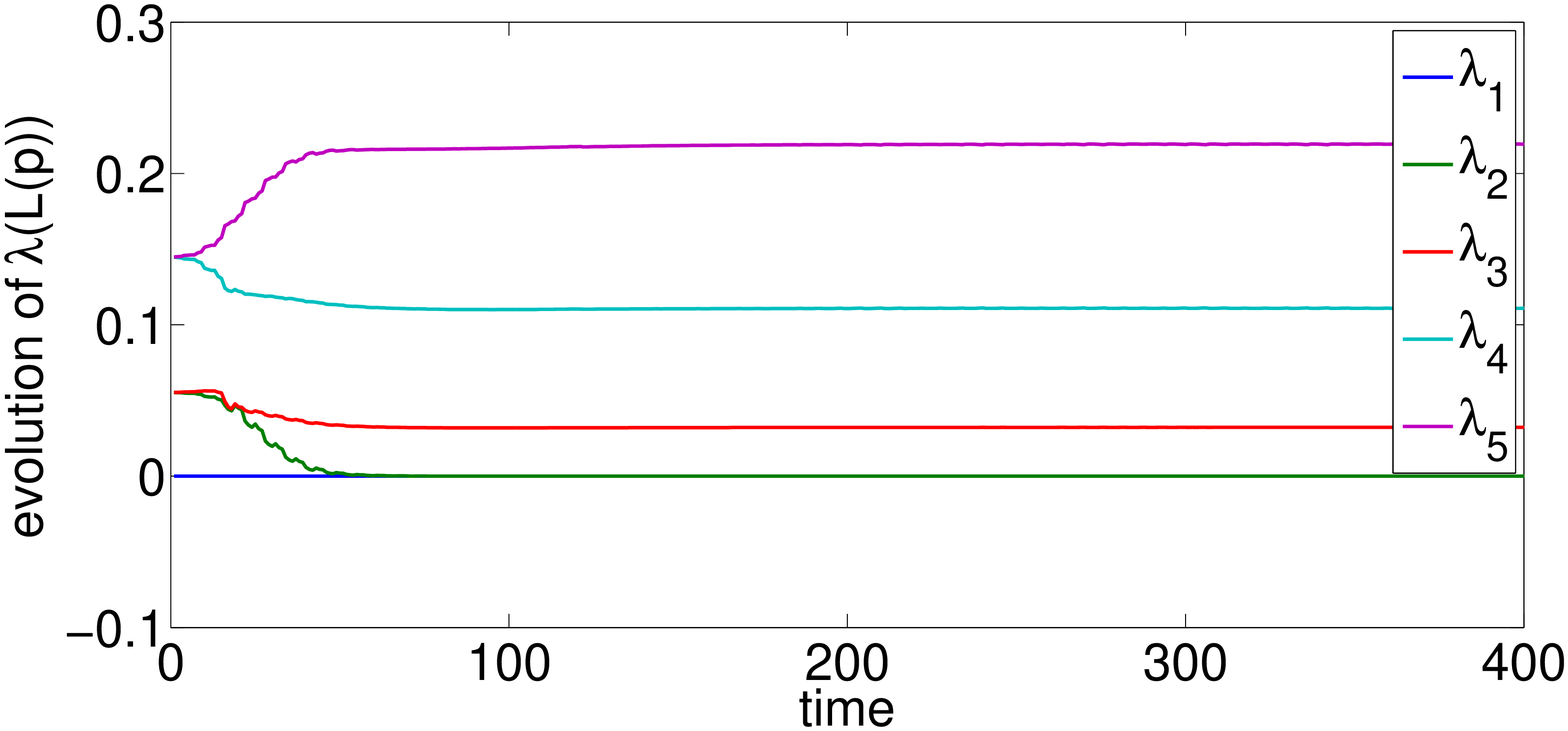}
	\caption{Evolution of eigenvalues of the Laplacian $L(p)$ of the graph $\mathcal{G}$ from Fig. \ref{graph9}.}
	\label{graph8}
\end{figure}

\begin{figure}[t!]
	\centering
	\includegraphics[width=3.65in]{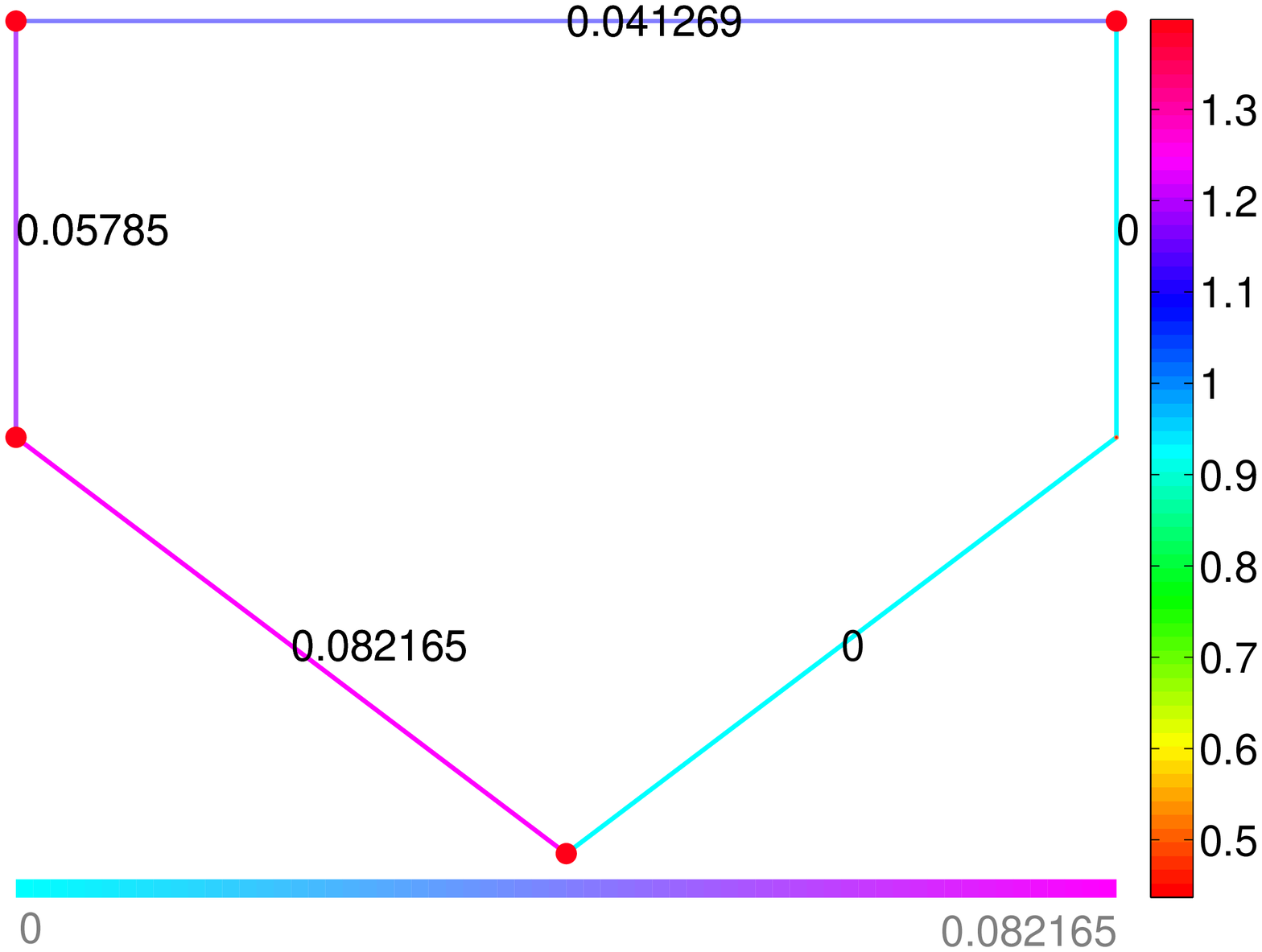}
	\caption{The graph $\mathcal{G}$ from Fig. \ref{graph9} at steady state.}
	\label{graph10}
\end{figure}

\begin{figure}[t!]
	\centering
	\includegraphics[width=3.65in]{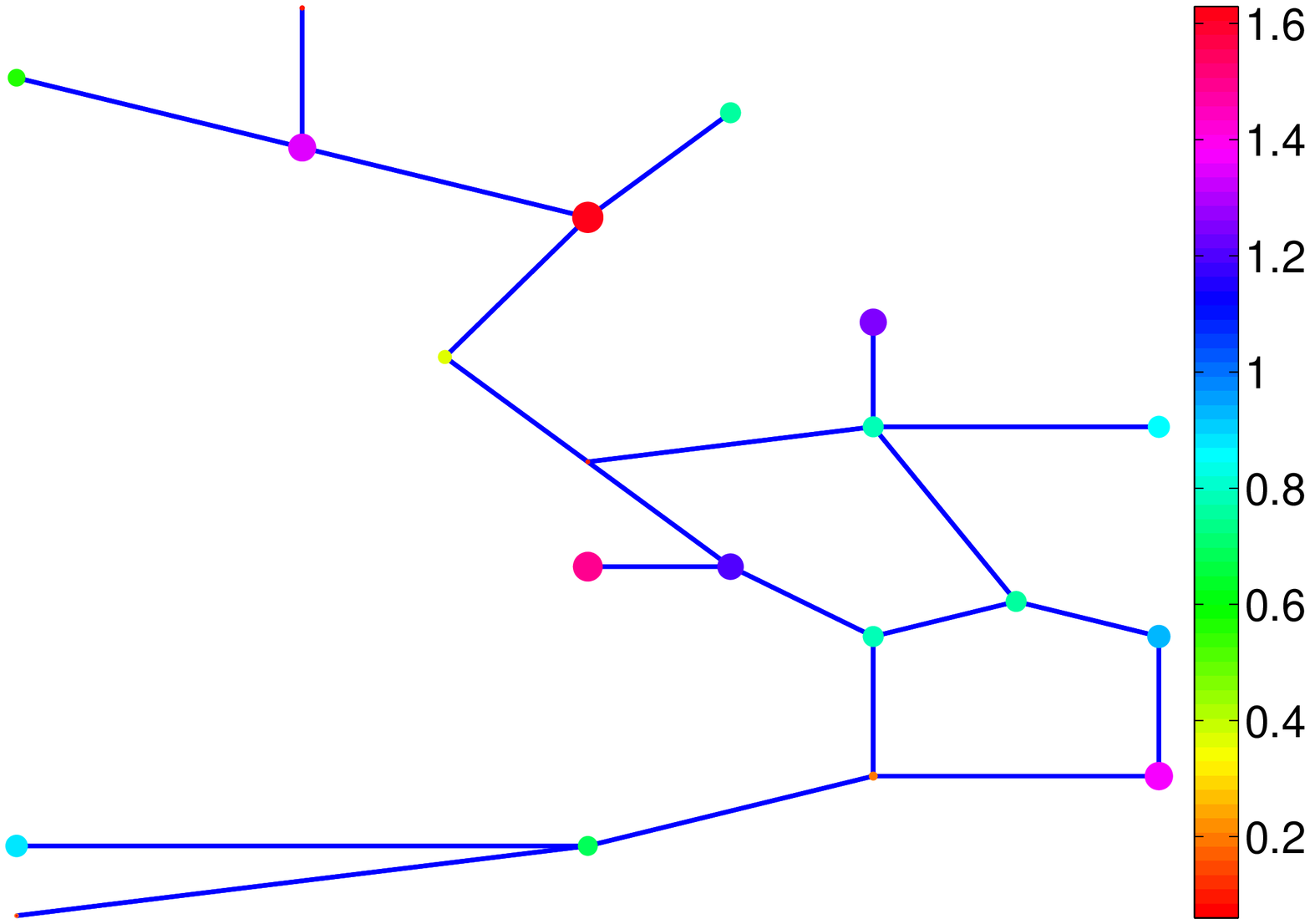}
	\caption{The graph $\mathcal{G}$ consisting 20 agents at time $t_0$.}
	\label{graph16}
\end{figure}
\begin{figure}[t!]
	\centering
	\includegraphics[width=3.65in]{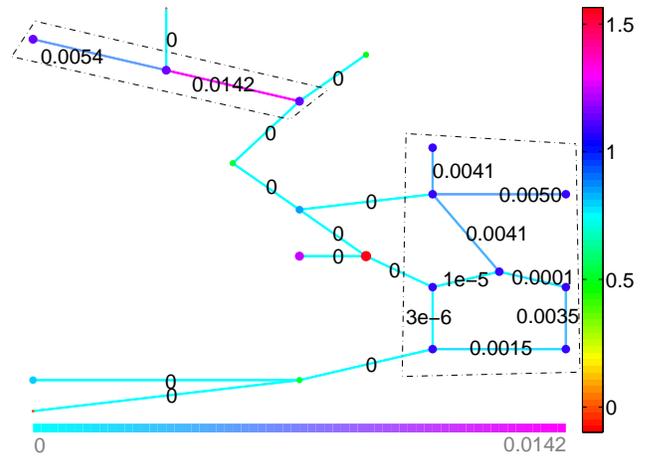}
	\caption{The graph $\mathcal{G}$ from Fig. \ref{graph16} at steady state.}
	\label{graph17}
\end{figure}
\section{Conclusions}\label{con}
A macroscopic behavior of the distributed replicator dynamic is analyzed using graph theory. The fitness dynamic is shown to resemble a linear parameter-varying dynamic agreement protocol. A comprehensive analysis of truncation behavior in the distributed replicator dynamic is put forth using spectral graph theory. It is proved that the existence and asymptotic stability of the Nash equilibrium of the population games are dependent on the eigenvalue spectrum of the graph-Laplacian matrix of the underlying network. Also, it is shown that with a right choice of a Lyapunov function for the fitness dynamic; the asymptotic stability of the fitness dynamic implies the asymptotic stability of the distributed replicator dynamic.
 	
\bibliographystyle{unsrt}  
\bibliography{tran_updated} 
\end{document}